\newtheorem{thm}{Theorem}[section]
 \newtheorem{cor}[thm]{Corollary}
 \newtheorem{lem}[thm]{Lemma}
\numberwithin{equation}{section}
 \theoremstyle{definition}
  \newtheorem{defn}[thm]{Definition}
  \newtheorem{question}[thm]{Question}
 \theoremstyle{remark}
 \newtheorem{rem}[thm]{Remark}
  \newtheorem{ex}[thm]{Example}
\newtheorem*{claim*}{Claim}
\def\NN{\mathbb{N}}
\def\RR{\mathbb{R}}
\def\ZZ{\mathbb{Z}}
\def\Nd{\mathcal{N}}
\def\max{\mathrm{max}}
\def\Im{\mathrm{Im}}
\begin{document}

\title[Examples of hyperbolic spaces]{Examples of hyperbolic spaces without the properties of quasi-ball or bounded eccentricity}

\author{Qizheng You and Jiawen Zhang}

\address[Q. You]{School of Mathematical Sciences, Fudan University, 220 Handan Road, Shanghai, 200433, China.}
\email{18307110161@fudan.edu.cn}

\address[J. Zhang]{School of Mathematical Sciences, Fudan University, 220 Handan Road, Shanghai, 200433, China.}
\email{jiawenzhang@fudan.edu.cn}

\date{}

\thanks{}

\begin{abstract}
In this note, we present examples of non-quasi-geodesic metric spaces which are hyperbolic (\emph{i.e.}, satisfying the Gromov's $4$-point condition) while the intersection of any two metric balls therein does not either ``look like'' a ball or has uniformly bounded eccentricity. This answers an open question posed in \cite{Ind07}.
\end{abstract}

\date{\today}
\maketitle

\parskip 4pt


\textit{Keywords: Gromov's hyperbolic spaces, Non-(quasi)-geodesic, Quasi-ball property, Bounded eccentricity property}

\section{Introduction}\label{sec:intro}

In the seminal work \cite{Gro87}, Gromov introduced a notion of hyperbolicity for metric spaces which encodes the information of metric curvatures for the underlying spaces, with prototypes from classic hyperbolic geometry. Gromov's hyperbolic spaces have attractd a lot of interest since they are discovered and have fruitful applications in various aspects of mathematics (see, \emph{e.g.}, \cite{Mar99, GH90}).

Recall that a geodesic metric space $(X,d)$ is called \emph{hyperbolic} (in the sense of Gromov \cite{Gro87}) if there exists $\delta>0$ such that for any geodesic triangle in $(X,d)$, the union of the $\delta$-neighbourhoods of any two sides of the triangle contains the third. Gromov also provided a characterisation for his hyperbolicity using the so-called \emph{Gromov product}: 
\[
(x|y)_p = \frac{1}{2}(d(x,p)+d(y,p)-d(x,y)) \quad \text{for} \quad x,y,p\in (X,d).
\]
He proved in \cite{Gro87} that a geodesic metric space $(X,d)$ is hyperbolic \emph{if and only if} the following condition holds:

\noindent\textbf{Gromov's $4$-point condition:} There exists $\delta>0$ such that 
\[
(x|y)_p \geq \min \{(x|z)_p,(y|z)_p\} -\delta \quad \text{for all} \quad x,y,z,p\in (X,d).
\] 

Note that the statement of the Gromov's $4$-point condition does not require that the underlying space $(X,d)$ is geodesic. Hence in the general context, we say that a (not necessarily geodesic) metric space is \emph{hyperbolic} if the Gromov's $4$-point condition holds.

Later in \cite{Ind07}, Chatterji and Niblo discovered new characterisations of Gromov's hyperbolicity for geodesic metric spaces using the geometry of intersections of balls. More precisely, for a geodesic metric space, they showed that it is hyperbolic in the sense of Gromov \emph{if and only if} the following holds:

\noindent\textbf{Quasi-ball property:} The intersection of any two metric balls is at uniformly bounded Hausdorff distance from a ball.

They also considered the eccentricity of the intersection of balls. Recall that for a metric space $(X,d)$ and $\delta>0$, we say that the \emph{eccentricity} of a subset $S$ of $X$ is less than $\delta$ if there exist $c,c' \in X$ and $R\geq 0$ such that
  \begin{equation}\label{EQ:ecc}
    B(c,R) \subseteq S \subseteq B(c',R+\delta).
  \end{equation}
Here we use $B(x,r):=\{y\in X: d(x,y) \leq r\}$ to denote the metric ball. The \emph{eccentricity} of $S$ is the infimum of $\delta$ satisfying (\ref{EQ:ecc}). By convention, the eccentricity of the empty set is $0$. Chatterji and Niblo proved in \cite{Ind07} that a geodesic metric space is hyperbolic \emph{if and only if} the following holds:

\noindent\textbf{Bounded eccentricity property:} The intersection of any two metric balls has uniformly bounded eccentricity.

In \cite[Section 4]{Ind07}, Chatterji and Niblo also discussed the situation of non-geodesic metric spaces. They recorded an example due to Viktor Schroeder (see \cite[Example 18]{Ind07}) that there exists a non-geodesic metric space with the quasi-ball property but not hyperbolic (\emph{i.e.}, does not satisfy the Gromov's $4$-point condition). However, the other direction is unclear and hence, they asked the following:

\begin{question}\label{ques}
Does there exist a non-geodesic hyperbolic metric space which does not satisfy the quasi-ball property or the bounded eccentricity property?
\end{question}

In this short note, we provide an affirmative answer to Question \ref{ques} by constructing concrete examples. The main result is the following:

\begin{thm}\label{thm:main result}
There exists a non-quasi-geodesic hyperbolic (\emph{i.e.}, satisfying the Gromov's $4$-point condition) space which does not satisfy either the quasi-ball property or the bounded eccentricity property.
\end{thm}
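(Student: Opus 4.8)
The plan is to avoid checking the Gromov $4$-point condition directly and instead exploit that it is a condition on $4$-tuples of points, hence is inherited by every subspace with the induced metric. In particular any subset of an $\RR$-tree is hyperbolic with the \emph{same} constant $\delta=0$. I will therefore realise the example as a discrete subset $S$ of an $\RR$-tree $T$, and put all the work into arranging the metric geometry of $S$ so that it is not quasi-geodesic while exhibiting a family of ball-intersections that is simultaneously of unbounded eccentricity and uniformly far, in Hausdorff distance, from every ball.

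The basic gadget is a ``double broom with an empty centre''. For each $n$ choose a scale $\rho_n\to\infty$, a point $m_n\in T$ that will \emph{not} lie in $S$, and two geodesic rays from $m_n$ in distinct directions. On the first ray put points $a^n_0,a^n_1,\dots,a^n_{K_n}$ at distances $\rho_n,\rho_n+1,\dots,\rho_n+K_n$ from $m_n$, and symmetrically $b^n_0,\dots,b^n_{K_n}$ on the second, with $K_n\approx\rho_n$. Let $S$ be the union of these configurations $C_n$, attaching the $n$-th one to a fixed base point of $T$ along a private edge of length $\ell_n\gg\rho_n$, so that distinct configurations are so far apart that no ball of radius $O(\rho_n)$ meeting $C_n$ can reach another $C_m$. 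Then $d(a^n_0,b^n_0)=2\rho_n$, no point of $S$ lies within distance $\rho_n$ of the midpoint $m_n$, and the remaining ``decoy'' points $a^n_j,b^n_j$ with $j\ge1$ sit strictly outside $B(m_n,\rho_n)$.

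Hyperbolicity is then free. For non-quasi-geodesicity I use that a $(\lambda,\varepsilon)$-quasi-geodesic of $S$ is also one of the ambient tree $T$; by stability of quasi-geodesics in the $0$-hyperbolic space $T$ its image stays within a distance $R(\lambda,\varepsilon)$ of the geodesic $[a^n_0,b^n_0]$, in particular it comes $R(\lambda,\varepsilon)$-close to $m_n$. Since $S$ meets no ball $B(m_n,\rho_n-1)$, once $\rho_n>R(\lambda,\varepsilon)$ the points $a^n_0,b^n_0$ admit no such quasi-geodesic, so no single pair $(\lambda,\varepsilon)$ works for all $n$ and $S$ is not quasi-geodesic. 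Next, a short tree computation shows that the two balls $B(a^n_{K_n},2\rho_n+K_n)$ and $B(b^n_{K_n},2\rho_n+K_n)$ intersect $S$ exactly in the two-point set $T_n:=\{a^n_0,b^n_0\}$. For the eccentricity, any ball contained in $T_n$ is a single point, because a ball about $a^n_0$ of radius $\ge1$ already captures $a^n_1\notin T_n$; but covering $\{a^n_0,b^n_0\}$ from a point of $S$ needs radius $2\rho_n$, there being no point near $m_n$, so the eccentricity of $T_n$ is at least $2\rho_n-1\to\infty$ and bounded eccentricity fails. For the quasi-ball property I argue by contradiction: a ball $B(c,r)$ within Hausdorff distance $\rho_n/4$ of $T_n$ must contain points $\rho_n/4$-close to both $a^n_0$ and $b^n_0$, which forces $r>\tfrac34\rho_n$ and puts $c$ on one of the two rays of $C_n$; but then $B(c,r)$ also swallows the farthest decoy on that ray, which lies at distance $\approx\rho_n$ from $T_n$, contradicting that every point of $B(c,r)$ is $\rho_n/4$-close to $T_n$. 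Hence every ball is at Hausdorff distance $\ge\rho_n/4$ from $T_n$ and the quasi-ball property fails too.

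The main obstacle is precisely this last point: ruling out a ``lucky'' ball that happens to approximate the two-point intersection. In a tree, a symmetric configuration around an empty centre is annoyingly often equal to a genuine ball centred at one of its own points --- a bare pair $\{a,b\}$ is the ball $B(a,d(a,b))$ as soon as nothing else is nearby --- which would make both properties hold for that pair. The decoys exist solely to break this: each is near enough to be captured by any ball large enough to span $T_n$, yet far enough from $m_n$ to be excluded from $T_n$ itself, so every spanning ball must overshoot. Making the three competing constraints --- decoys inside all spanning balls, decoys outside $B(m_n,\rho_n)$, and distinct configurations mutually non-interfering --- hold at once is the real content; the rest is bookkeeping with the tree metric.
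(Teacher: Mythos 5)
Your proposal is correct, but it takes a genuinely different route from the paper. The paper's example is the Euclidean plane $\RR^2$ with the rescaled metric $d'=\ln(1+d)$: hyperbolicity is Gromov's logarithm observation, non-quasi-geodesicity is proved by comparing $d$- and $d'$-lengths of tamed quasi-geodesics, failure of the quasi-ball property is obtained by transferring that property back to $(\RR^2,d)$ and invoking Chatterji--Niblo's theorem, and failure of bounded eccentricity comes from the lens-shaped intersections $B((0,0),n+1)\cap B((2n,0),n+1)$, which contain no ball of radius larger than $1$ yet have diameter $2\sqrt{2n+1}$. You instead take a discrete subset $S$ of an $\RR$-tree, so the $4$-point condition holds with $\delta=0$ by heredity, and you engineer two-point ball intersections $\{a^n_0,b^n_0\}$ at mutual distance $2\rho_n$ around an empty centre. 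Your verifications are sound: the Morse lemma in the ambient geodesic tree correctly rules out $(\lambda,\varepsilon)$-quasi-geodesics between $a^n_0$ and $b^n_0$ once $\rho_n$ exceeds the stability constant; the computation $B(a^n_{K_n},2\rho_n+K_n)\cap B(b^n_{K_n},2\rho_n+K_n)\cap S=\{a^n_0,b^n_0\}$ is a correct tree calculation provided $\ell_n>\rho_n$ so these balls stay inside $C_n$; and the decoys do exactly the job you describe, forcing eccentricity at least $2\rho_n-1$ and forcing any ball Hausdorff-close to $\{a^n_0,b^n_0\}$ to swallow a point at distance about $K_n$ from it. What your approach buys is that every step reduces to elementary tree-metric bookkeeping and a single family of witnesses kills both properties at once; what the paper's approach buys is a more natural ambient space and the structural observation that the quasi-ball property transfers in both directions under the logarithmic rescaling while bounded eccentricity transfers in only one, which is of independent interest. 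When writing yours up, fix the constants explicitly (say $K_n=\rho_n=n$ and $\ell_n\geq 10n$) and note that the stability theorem applies to not-necessarily-continuous quasi-geodesics in the geodesic tree $T$.
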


Our construction is motivated by Gromov's observation in \cite[Section 1.2]{Gro87} (also suggested in \cite[Section 4]{Ind07}) that for a metric space $(X,d)$, we can endow another metric $d'$ on $X$ defined by 
\begin{equation}\label{EQ:new metric}
d'(x,y)=\ln (1+d(x,y)) \quad \text{for} \quad x,y\in X
\end{equation}
such that $(X,d')$ satisfies the Gromov's $4$-point condition. We show that if $(X,d)$ is geodesic and unbounded, then $(X,d')$ cannot be quasi-geodesic (see Corollary \ref{cor:non-quasi-geodesic}). Then we study the relation of the quasi-ball property and the bounded eccentricity property between $(X,d)$ and $(X,d')$ (see Lemma \ref{lem:quasi-ball property}, Lemma \ref{lem:ecc control} and Lemma \ref{negative prop}). Finally, we show in Example \ref{ex:ecc unbdd} that for the Euclidean space $X=\RR^2$ with the Euclidean metric $d$, the construction $(X,d')$ in (\ref{EQ:new metric}) provides an example to conclude Theorem \ref{thm:main result}.

\section{Preliminaries}\label{sec:pre}

Here we collect some necessary notions and notation for this note.

Let $(X,d)$ be a metric space. For $x \in X$ and $R \geq 0$, denote the (metric) ball by $B(x,R) = \{ y\in X : d(x,y) \leq R\}$. We say that $(X,d)$ is \emph{bounded} if there exist $x\in X$ and $R \geq 0$ such that $X = B(x,R)$, and \emph{unbounded} if it is not bounded. For a subset $A \subset X$ and $R\geq 0$, denote $\Nd_R(A) = \{x\in X: d(x,A) \leq R\}$ the $R$-\emph{neighbourhood} of $A$ in $X$. For subsets $A,B \subset X$, the \emph{Hausdorff distance} between $A$ and $B$ is 
\[
d_H(A,B) = \inf \{R \geq 0: A \subseteq \Nd_R(B) \text{ and } B \subseteq \Nd_R(A)\}.
\]


Recall that a \emph{path} in a metric space $(X,d)$ is a continuous map $\gamma: [a,b] \to X$. A path $\gamma$ is called \emph{rectifiable} if its \emph{length}
\[
\ell(\gamma):= \sup \left\{ \sum_{i=1}^{n} d(\gamma(t_{i-1}),\gamma(t_i)) : a=t_0< t_1 <\cdots < t_n=b , n\in \NN \right\}
\]
is finite. Usually it is convenient to change the parameter $t\in [a,b]$ to the \emph{standard arc parameter} $s\in [0,\ell(\gamma)]$ as follows. Define a map $\varphi:[a,b] \rightarrow [0,\ell(\gamma)]$ by $t \mapsto s=\ell(\gamma|_{[a,t]})$, and set $\tilde{\gamma}:[0,\ell(\gamma)] \to X$ by $\tilde{\gamma}:=\gamma \circ \varphi^{-1}$. Then we have $\ell(\tilde{\gamma}|_{[s_1, s_2]}) = |s_1 - s_2|$ for any $0 \leq s_1 \leq s_2 \leq \ell(\gamma)$.

Now we recall the notion of (quasi-)geodesics.

\begin{defn}\label{defn:(quasi-)geodesic}
  Let $(X,d)$ be a metric space.
  \begin{enumerate}
    \item Given $x,y \in X$, a \emph{geodesic} between $x$ and $y$ is an isometric embedding $\gamma : [0,d(x,y)] \rightarrow X$ with $\gamma(0)=x$ and $\gamma(d(x,y))=y$.
    The space $(X,d)$ is called \emph{geodesic} if for any $x,y$ in $X$, there exists a geodesic between $x$ and $y$.
    \item Given $x,y \in X, L \geq 1$ and $C \geq 0$, an \emph{$(L,C)$-quasi-geodesic} between $x$ and $y$ is a map $\gamma: [0,T] \rightarrow X$ such that $\gamma(0)=x, \gamma(T)=y$ and
    \begin{equation*}
      \frac{1}{L}|a-b|-C \leq d(\gamma(a),\gamma(b)) \leq L|a-b|+C \quad \text{for all} \quad a,b\in [0,T].
    \end{equation*}
    The space $(X,d)$ is called \emph{$(L,C)$-quasi-geodesic} if for any $x,y$ in $X$, there exists an $(L,C)$-quasi-geodesic between $x$ and $y$. We also say that $(X,d)$ is \emph{quasi-geodesic} if it is $(L,C)$-quasi-geodesic for some $L$ and $C$.
  \end{enumerate}
\end{defn}

We also need the notion of ultrametric space. Recall that a metric space $(X,d)$ is called \emph{ultrametric} if there exists $\delta>0$ such that for any points $x,y,z \in X$, we have
    \begin{equation*}
      d(x,y) \leq \max \{d(x,z),d(y,z)\} + \delta.
    \end{equation*}
The following is due to Gromov:

\begin{lem}[{\cite[Section 1.2]{Gro87}}]\label{lem:ultrametric is hyperbolic}
An ultrametric space satisfies the Gromov's $4$-point condition.
\end{lem}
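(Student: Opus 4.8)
The plan is to show that the additive ultrametric inequality, with its constant $\delta$, forces the three Gromov products $(x|y)_p$, $(x|z)_p$, $(y|z)_p$ based at a common point $p$ to satisfy the $4$-point inequality with constant a fixed multiple of $\delta$. First I would record the elementary reformulation that the condition ``$(x|y)_p \geq \min\{(x|z)_p,(y|z)_p\} - \delta'$ for all $x,y,z$'' is equivalent to the statement that, among the three numbers $(x|y)_p,(x|z)_p,(y|z)_p$, the two smallest differ by at most $\delta'$. Thus it suffices to bound this gap uniformly.

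The key are two elementary estimates. For any $u,w$, the triangle inequality gives $(u|w)_p \le \min\{d(u,p),d(w,p)\}$, while the ultrametric inequality applied to the triple $u,w,p$ yields $d(u,w) \le \max\{d(u,p),d(w,p)\}+\delta$, hence $(u|w)_p \ge \tfrac12\min\{d(u,p),d(w,p)\} - \tfrac\delta2$. Moreover, when $|d(u,p)-d(w,p)| > \delta$, the ultrametric inequality in the form $d(w,p) \le \max\{d(u,w),d(u,p)\}+\delta$ (with $d(w,p)$ the larger) forces $d(u,w) \ge \max\{d(u,p),d(w,p)\} - \delta$, which upgrades the lower estimate to $(u|w)_p \le \tfrac12\min\{d(u,p),d(w,p)\} + \tfrac\delta2$. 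In words: once two points sit at genuinely different distances from $p$, their Gromov product is pinned to half the smaller distance, up to $\tfrac\delta2$.

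Now I would run a case analysis. Relabelling, assume $d(x,p) \le d(y,p) \le d(z,p)$. If the closest point $x$ is separated from the other two, i.e.\ $d(y,p)-d(x,p) > \delta$ and $d(z,p)-d(x,p) > \delta$, then the pinning estimate places both $(x|y)_p$ and $(x|z)_p$ in the interval $[\tfrac12 d(x,p)-\tfrac\delta2,\ \tfrac12 d(x,p)+\tfrac\delta2]$, while $(y|z)_p \ge \tfrac12 d(y,p)-\tfrac\delta2 \ge \tfrac12 d(x,p)-\tfrac\delta2$. Since at least two of the three products lie in an interval of length $\delta$ and the third lies above its lower endpoint, the two smallest products differ by at most $\delta$. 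In the complementary degenerate case, where all three base distances lie within $\delta$ of one another, I would instead apply the ultrametric inequality to the triple $x,y,z$: its two largest pairwise distances then differ by at most $\delta$, and since each product has the form $\tfrac12(d(\cdot,p)+d(\cdot,p)-d(\cdot,\cdot))$ with all base distances $\delta$-close, the two products attached to these two largest pairwise distances — which are precisely the two smallest products — again differ by at most a fixed multiple of $\delta$.

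The main obstacle is purely bookkeeping rather than conceptual: the boundary regimes, where exactly one of $d(y,p)-d(x,p)$ or $d(z,p)-d(x,p)$ fails to exceed $\delta$, must be absorbed into one of the two cases above, and one must track how the $\tfrac\delta2$ errors from the repeated applications of the ultrametric inequality accumulate. None of this is difficult once the pinning estimate is in hand; it only inflates the constant, and one concludes that the Gromov $4$-point condition holds with, say, $\delta'=2\delta$ — any fixed multiple of $\delta$ suffices, and the exact value is irrelevant for the lemma.
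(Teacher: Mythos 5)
The paper offers no proof of this lemma at all --- it is imported from \cite{Gro87} as a known fact and used only to deduce Lemma \ref{lem:new metric is hyperbolic} --- so your argument can only be judged on its own terms, and on those terms the strategy is correct and the details do close. The reformulation (the two smallest of the three Gromov products at $p$ differ by at most $\delta'$), both pinning estimates, and the main case all check out: with $d(x,p)\le d(y,p)\le d(z,p)$ and $d(y,p)-d(x,p)>\delta$, the products $(x|y)_p$ and $(x|z)_p$ lie in $[\tfrac12 d(x,p)-\tfrac{\delta}{2},\,\tfrac12 d(x,p)+\tfrac{\delta}{2}]$ while $(y|z)_p$ exceeds the left endpoint of that interval, so the two smallest products differ by at most $\delta$. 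Two points of care in the bookkeeping you defer. First, the boundary regime $d(y,p)-d(x,p)\le\delta<d(z,p)-d(x,p)$ cannot be absorbed into Case 1 verbatim, since only $(x|z)_p$ is then pinned at $\tfrac12 d(x,p)$; you must split further on $d(z,p)-d(y,p)$: if it exceeds $\delta$, pin instead the two products involving $z$ (their centres $\tfrac12 d(x,p)$ and $\tfrac12 d(y,p)$ are $\tfrac{\delta}{2}$-close, so the Case 1 argument runs with constant $\tfrac32\delta$), and if not, all three base distances lie within $2\delta$ of one another and you are in Case 2. Second, in Case 2 the two products attached to the two largest pairwise distances are not literally the two smallest products once the base distances are perturbed, and ``two of the three values are close'' does not by itself bound the gap between the two smallest (consider $0,\,10,\,10.1$); you also need the observation that the product attached to the \emph{smallest} pairwise distance is at least each of the other two minus $2\delta$ (immediate from the $2\delta$-closeness of the base distances), which rules out the bad configuration. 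With these repairs the argument yields the $4$-point condition with $\delta'=\tfrac52\delta$, which is all the lemma requires.
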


Recall from Section \ref{sec:intro} that for a metric space $(X,d)$, Gromov considered another metric $d'$ on $X$ defined in (\ref{EQ:new metric}) and noticed that
\begin{eqnarray*}
  d'(x,y) &\leq& \ln (1+d(x,z)+d(z,y)) \\
  &\leq& \ln(2+2\max \{ d(x,z),d(y,z) \}) \\
  &=& \max \{ d'(x,z),d'(y,z) \} + \ln 2.
\end{eqnarray*}
Combining with Lemma \ref{lem:ultrametric is hyperbolic}, we obtain the following:

\begin{lem}[{\cite[Section 1.2]{Gro87}}]\label{lem:new metric is hyperbolic}
For a metric space $(X,d)$, the new metric $d'$ on $X$ defined in (\ref{EQ:new metric}) satisfies the Gromov's $4$-point condition.
\end{lem}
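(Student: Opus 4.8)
The plan is to show that $(X,d')$ is an ultrametric space in the sense defined above, with additive constant $\delta = \ln 2$, and then to invoke Lemma \ref{lem:ultrametric is hyperbolic}. Since that lemma already supplies the passage from the ultrametric inequality to Gromov's $4$-point condition, the only real work is to verify the ultrametric inequality for $d'$; along the way I would also confirm that $d'$ is genuinely a metric, so that the notion of ultrametric space applies to it.

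First I would check that $d'$ is a metric. Symmetry and positivity are immediate from the corresponding properties of $d$ together with the fact that $t \mapsto \ln(1+t)$ is a strictly increasing bijection of $[0,\infty)$ fixing $0$. For the triangle inequality I would use the elementary multiplicative estimate
\[
1 + d(x,y) \le 1 + d(x,z) + d(z,y) \le (1+d(x,z))(1+d(z,y)),
\]
where the first inequality is the triangle inequality for $d$ and the second holds because the product on the right expands to $1 + d(x,z) + d(z,y) + d(x,z)d(z,y)$. Taking $\ln$ and using its monotonicity and additivity on products then yields $d'(x,y) \le d'(x,z) + d'(z,y)$.

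Next I would establish the ultrametric inequality, which is exactly the computation already recorded before the statement. Starting from $d'(x,y) = \ln(1+d(x,y))$, the triangle inequality for $d$ and the monotonicity of $\ln$ give $d'(x,y) \le \ln(1 + d(x,z) + d(z,y))$. Bounding $d(x,z) + d(z,y) \le 2\max\{d(x,z), d(z,y)\}$ and using monotonicity once more produces $\ln(2 + 2\max\{d(x,z), d(z,y)\})$; factoring out the $2$ and applying $\ln(2t) = \ln 2 + \ln t$ identifies this with $\max\{d'(x,z), d'(z,y)\} + \ln 2$. The key point in the last step is that $\ln(1 + \max\{a,b\}) = \max\{\ln(1+a), \ln(1+b)\}$, since $\ln(1+\cdot)$ is increasing, so that $\max$ commutes with it. This is precisely the ultrametric inequality with $\delta = \ln 2$.

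Finally, applying Lemma \ref{lem:ultrametric is hyperbolic} to $(X,d')$ completes the proof. I expect no serious obstacle here: the argument is entirely elementary, the only points needing a moment of care being the multiplicative estimate used for the triangle inequality and the interchange of $\max$ with the increasing function $\ln(1+\cdot)$ in the final line.
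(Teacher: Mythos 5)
Your proposal is correct and follows exactly the paper's route: the displayed chain of inequalities before the lemma establishes the ultrametric inequality $d'(x,y) \leq \max\{d'(x,z),d'(y,z)\} + \ln 2$, and Lemma \ref{lem:ultrametric is hyperbolic} then gives the Gromov $4$-point condition. Your additional verification that $d'$ is a genuine metric (via $1+d(x,y) \leq (1+d(x,z))(1+d(z,y))$) is a sensible extra check that the paper leaves implicit.
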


\section{Proof of Theorem \ref{thm:main result}}

%

This whole section is devoted to the proof of Theorem \ref{thm:main result}, which is divided into several parts.

Firstly, we would like to study the property of (quasi-)geodesic for the new metric $d'$ defined in (\ref{EQ:new metric}). To simplify the notation, for a path $\gamma$ in $X$, we denote $\ell(\gamma)$ and $\ell'(\gamma)$ its length with respect to the metric $d$ and $d'$, respectively.

We need the following lemma:

\begin{lem}\label{length}
  Let $(X,d)$ be a metric space and $d'$ be the metric on $X$ defined in (\ref{EQ:new metric}).
A path $\gamma : [a,b] \rightarrow X$ in $X$ is rectifiable with respect to $d$ \emph{if and only if} it is rectifiable with respect to $d'$. In this case, we have $\ell(\gamma) = \ell'(\gamma)$.
\end{lem}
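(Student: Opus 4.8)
The plan is to compare the two length functionals through the elementary inequalities satisfied by the function $t \mapsto \ln(1+t)$. First I would record that $d$ and $d'$ induce the same topology: since $d'(x,y) = \ln(1+d(x,y))$ is a strictly increasing continuous function of $d(x,y)$ that vanishes exactly when $d(x,y)=0$, a sequence converges in $(X,d)$ if and only if it converges in $(X,d')$. Consequently the notion of a (continuous) path, and hence the two rectifiability questions and the two length functionals, all refer to the same object $\gamma$; only the numerical value of the approximating sums changes. Throughout, for a partition $P: a=t_0<\cdots<t_n=b$ I write $S_d(P)=\sum_{i=1}^n d(\gamma(t_{i-1}),\gamma(t_i))$ and similarly $S_{d'}(P)$, so that $\ell(\gamma)=\sup_P S_d(P)$ and $\ell'(\gamma)=\sup_P S_{d'}(P)$.

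For the inequality $\ell'(\gamma) \leq \ell(\gamma)$ I would use the bound $\ln(1+t)\leq t$, valid for all $t\geq 0$. Applied termwise this gives $S_{d'}(P)\leq S_d(P)\leq \ell(\gamma)$ for every partition $P$, and taking the supremum yields $\ell'(\gamma)\leq \ell(\gamma)$. In particular, rectifiability with respect to $d$ implies rectifiability with respect to $d'$.

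The reverse inequality $\ell(\gamma)\leq \ell'(\gamma)$ is the substantive direction and rests on $\ln(1+t)$ being close to $t$ for small $t$. Quantitatively, the function $t\mapsto \ln(1+t)/t$ is decreasing on $(0,\infty)$ with limit $1$ at $0$, so for every $\epsilon>0$ and every $t\in[0,\epsilon]$ one has $\ln(1+t)\geq c(\epsilon)\,t$ with $c(\epsilon):=\ln(1+\epsilon)/\epsilon\to 1$ as $\epsilon\to 0$. To exploit this I would force consecutive distances to be small by a refinement argument: since $[a,b]$ is compact and $\gamma$ is continuous, $\gamma$ is uniformly continuous, so there is $\delta>0$ with $d(\gamma(s),\gamma(t))\leq \epsilon$ whenever $|s-t|\leq \delta$. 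For any partition $P$ of mesh less than $\delta$ every term obeys the small-distance estimate, whence $S_d(P)\leq c(\epsilon)^{-1}S_{d'}(P)\leq c(\epsilon)^{-1}\ell'(\gamma)$.

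Finally I would upgrade this bound on fine partitions to a bound on $\ell(\gamma)$ using monotonicity of the sums under refinement: inserting a point into a partition can only increase $S_d$, by the triangle inequality, so every partition can be refined to one of mesh less than $\delta$ with an at-least-as-large sum. Hence $\ell(\gamma)=\sup_{\operatorname{mesh}(P)<\delta} S_d(P)\leq c(\epsilon)^{-1}\ell'(\gamma)$, and letting $\epsilon\to 0$ gives $\ell(\gamma)\leq \ell'(\gamma)$. Combined with the first inequality this proves $\ell(\gamma)=\ell'(\gamma)$, and it also yields the remaining rectifiability implication (taking contrapositives handles the case $\ell'(\gamma)=\infty$). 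The step I expect to be the main obstacle is precisely this last interchange — justifying that the supremum over all partitions equals the supremum over arbitrarily fine ones — because the local comparison $\ln(1+t)\approx t$ holds only for small $t$ and degenerates for large $t$ (the ratio tends to $0$); it is the refinement monotonicity together with uniform continuity that rescues the argument.
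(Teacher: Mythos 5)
Your proof is correct and shares its skeleton with the paper's: the termwise bound $\ln(1+t)\le t$ gives $\ell'(\gamma)\le\ell(\gamma)$, and the reverse inequality comes from the local comparison $\ln(1+t)\ge c(\epsilon)\,t$ on $[0,\epsilon]$ applied to a sufficiently fine partition, combined with monotonicity of the approximating sums under refinement. The one step you execute genuinely differently is how you force consecutive distances to be small. The paper reparametrises $\gamma$ by its $d'$-arc length (legitimate in that direction because $\ell'(\gamma)<\infty$ is the hypothesis) and then bounds $d'(\gamma(t_{i-1}),\gamma(t_i))$ by $|t_i-t_{i-1}|$; you instead note that $d$ and $d'$ induce the same topology and invoke uniform continuity of $\gamma$ on the compact interval $[a,b]$ (Heine--Cantor). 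Your route is somewhat more elementary and a little more general: it sidesteps the arc-length change of variable (and the implicit issue of inverting $t\mapsto\ell(\gamma|_{[a,t]})$ when $\gamma$ is constant on a subinterval), and it establishes $\ell(\gamma)\le c(\epsilon)^{-1}\ell'(\gamma)$ for an arbitrary continuous path, so the rectifiability equivalence drops out in one stroke rather than being handled by cases. What the paper's reparametrisation buys in exchange is an explicit mesh threshold $|t_i-t_{i-1}|<\ln(1+\delta)$ expressed directly in the parameter, without any appeal to compactness. Both arguments are complete.
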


\begin{proof}
Firstly, we assume that $\gamma$ is rectifiable with respect to $d$, \emph{i.e.}, $\ell(\gamma)<\infty$. Note that $\ln(1+x) \leq x$ holds for all $x \geq 0$. Hence for any partition $a=t_0< t_1 <\cdots < t_n=b$, we have
\[
\sum_{i=1}^{n} d'(\gamma(t_{i-1}),\gamma(t_i)) \leq \sum_{i=1}^{n} d(\gamma(t_{i-1}),\gamma(t_i)) \leq \ell(\gamma),
\]
which implies that $\ell'(\gamma) \leq \ell(\gamma)$. In particular, $\gamma$ is rectifiable with respect to $d'$.


Conversely, we assume that $\gamma$ is rectifiable with respect to $d'$, \emph{i.e.}, $\ell'(\gamma)<\infty$. Without loss of generality, we can assume that $\gamma$ is parametrised by the standard arc parameter with $a=0$ and $b=\ell'(\gamma)$.
Note that for any $\alpha \in (0,1)$, there exists $\delta>0$ such that $\alpha x \leq \ln (1+x)$ holds for all $x \in [0,\delta]$. 
Given a partition $a=s_0< s_1 <\cdots < s_m=b$, we choose a refinement $a=t_0< t_1 <\cdots < t_n=b$ such that $|t_{i-1}-t_i|<\ln(1+\delta)$ holds for all $i$. Hence we have
  \begin{equation*}
    d'(\gamma(t_{i-1}),\gamma(t_i)) \leq l'(\gamma \vert_{[t_{i-1},t_i]})  =|t_{i-1}-t_i| < \ln (1+\delta),
  \end{equation*}
 which implies that $d(\gamma(t_{i-1}),\gamma(t_i)) < \delta$ due to (\ref{EQ:new metric}).
 Therefore, we obtain
 \[
 \sum_{i=1}^{m} d(\gamma(s_{i-1}),\gamma(s_i)) \leq \sum_{i=1}^{n} d(\gamma(t_{i-1}),\gamma(t_i)) \leq \frac{1}{\alpha} \cdot \sum_{i=1}^{n} d'(\gamma(t_{i-1}),\gamma(t_i)) \leq \frac{1}{\alpha} \cdot \ell'(\gamma)
 \]
for all $\alpha \in (0,1)$. Letting $\alpha \to 1$ and taking the supremum of the left hand side, we obtain that $\ell(\gamma) \leq \ell'(\gamma)$, which concludes the proof.
\end{proof}

As a direct corollary, we obtain the following:

\begin{cor}\label{cor:non-geodesic}
Let $(X,d)$ be a geodesic metric space which contains at least two elements, and $d'$ be the metric defined in (\ref{EQ:new metric}). Then the metric space $(X,d')$ is \emph{not} geodesic.
\end{cor}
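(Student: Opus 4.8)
The plan is to argue by contradiction, showing that the existence of a single $d'$-geodesic between two distinct points would force the elementary inequality $\ln(1+t) \le t$ (valid for all $t \ge 0$) to become an equality at some $t > 0$, which is impossible.

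First I would fix two distinct points $x,y \in X$, which exist since $X$ has at least two elements, and suppose toward a contradiction that there is a geodesic $\gamma\colon [0,\,d'(x,y)] \to X$ in $(X,d')$, that is, an isometric embedding with $\gamma(0)=x$ and $\gamma(d'(x,y))=y$. Because $\gamma$ is an isometric embedding of an interval with respect to $d'$, for any partition the sum of the $d'$-distances between consecutive points telescopes to the length of the interval; hence its $d'$-length is exactly $\ell'(\gamma) = d'(x,y) = \ln(1+d(x,y))$.

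Next I would observe that $\gamma$ is also a path in the original space $(X,d)$. Indeed, the two metrics induce the same topology: $d'(x,y) = \ln(1+d(x,y))$ is a continuous strictly increasing function of $d(x,y)$, with inverse $d(x,y) = \e^{d'(x,y)}-1$, so the identity map between $(X,d)$ and $(X,d')$ is a homeomorphism. Therefore the $d'$-continuous map $\gamma$ is $d$-continuous as well, and Lemma \ref{length} applies to give $\ell(\gamma) = \ell'(\gamma) = \ln(1+d(x,y))$.

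Finally I would use the fact that the $d$-length of any path joining $x$ and $y$ is at least $d(x,y)$, whence $\ln(1+d(x,y)) = \ell(\gamma) \ge d(x,y)$. Since $x \ne y$ we have $d(x,y) > 0$, and the strict inequality $\ln(1+t) < t$ for $t > 0$ gives $\ln(1+d(x,y)) < d(x,y)$, a contradiction. I expect the only delicate point to be the passage from a $d'$-geodesic to a genuinely $d$-rectifiable path, as required to invoke Lemma \ref{length}; this is dispatched cleanly by the topological equivalence of $d$ and $d'$, after which the argument reduces to a one-line comparison of lengths.
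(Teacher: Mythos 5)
Your proposal is correct and follows essentially the same route as the paper: assume a $d'$-geodesic exists between two distinct points, invoke Lemma \ref{length} to equate the $d$- and $d'$-lengths, and derive the contradiction $\ln(1+d(x,y)) \geq d(x,y)$ with $d(x,y)>0$. The extra remark on topological equivalence of $d$ and $d'$ is a harmless (and slightly more careful) justification of a step the paper leaves implicit.
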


\begin{proof}
By assumption, we take two distinct points $x, y \in X$. If $(X,d')$ is geodesic, we choose a geodesic $\gamma$ between $x$ and $y$. In particular, $\gamma$ is rectifiable with respect to $d'$. Hence by Lemma \ref{length}, we know that $\gamma$ is also rectifiable with respect to $d$ and we have
\[
 \ell'(\gamma) = \ell(\gamma) \geq d(x,y) > d'(x,y),
\]
where the last inequality follows from the assumption that $x\neq y$.
This is a contradiction to the assumption that $\gamma$ is a geodesic between $x$ and $y$ with respect to the metric $d'$. Hence we conclude the proof.
\end{proof}

Moreover, with an extra hypothesis, the new metric $d'$ cannot be even quasi-geodesic.

\begin{cor}\label{cor:non-quasi-geodesic}
Let $(X,d)$ be an unbounded geodesic metric space, and $d'$ be the metric defined in (\ref{EQ:new metric}). 
Then for any $L \geq 1$ and $C \geq 0$, the metric space $(X,d')$ cannot be $(L,C)$-quasi-geodesic.
\end{cor}

To prove Corollary \ref{cor:non-quasi-geodesic}, we need the following lemma to tame quasi-geodesics. The idea is similar to \cite[Lemma III.H.1.11]{Mar99}, but the setting is slightly different.



\begin{lem}\label{tame quasi-geodesics}
Let $(X,d)$ be a geodesic metric space and $d'$ be the metric defined in (\ref{EQ:new metric}).
  Given an $(L,C)$-quasi-geodesic $\gamma:[a,b]\rightarrow X$ with respect to $d'$, there exists a continuous and rectifiable $(L,C')$-quasi-geodesic $\gamma': [a,b] \rightarrow X$ with respect to $d'$ satisfying the following:
  \begin{enumerate}
    \item $\gamma'(a)=\gamma(a)$ and $\gamma'(b)=\gamma(b)$;
    \item $C'=3(L+C)$;
    \item $\ell'(\gamma'|_{[t,t']}) \leq k_1 d'(\gamma'(t),\gamma'(t')) + k_2$ for all $t,t' \in [a,b]$, where $k_1=L(L+C)$ and $k_2 =(LC'+4)(L+C)$;
    \item $d'_H(\Im(\gamma), \Im(\gamma')) \leq L+C$.
  \end{enumerate}
\end{lem}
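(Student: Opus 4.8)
The plan is to adapt the standard taming construction for quasi-geodesics (\cite[Lemma III.H.1.11]{Mar99}) to the metric $d'$, the essential twist being that $(X,d)$ is geodesic while $(X,d')$ is not (Corollary \ref{cor:non-geodesic}); hence the ``connecting'' segments must be taken to be $d$-geodesics, and every length must be computed via Lemma \ref{length}, which guarantees $\ell=\ell'$ for rectifiable paths. Concretely, I would sample $\gamma$ at integer parameters, setting $t_i=a+i$ for $0\le i\le n-1$ and $t_n=b$ with $n=\lceil b-a\rceil$, and define $\gamma'$ on each subinterval $[t_{i-1},t_i]$ to be a constant-$d$-speed reparametrisation of a $d$-geodesic from $\gamma(t_{i-1})$ to $\gamma(t_i)$. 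The resulting $\gamma'$ is continuous (endpoints of consecutive segments agree) and rectifiable, and property (1) holds since $\gamma'(a)=\gamma(t_0)=\gamma(a)$ and $\gamma'(b)=\gamma(t_n)=\gamma(b)$. Since $\gamma$ itself need not be continuous, throughout I would use only the quasi-geodesic distance bounds for $\gamma$ at the sample points, never continuity.

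Next I would verify the Hausdorff bound (4) and the quasi-geodesic property. For (4): any $\gamma'(s)$ with $s\in[t_{i-1},t_i]$ lies on a $d$-geodesic from $\gamma(t_{i-1})$ to $\gamma(t_i)$, so $d(\gamma(t_{i-1}),\gamma'(s))\le d(\gamma(t_{i-1}),\gamma(t_i))$ and therefore, by monotonicity of $x\mapsto\ln(1+x)$, $d'(\gamma(t_{i-1}),\gamma'(s))\le d'(\gamma(t_{i-1}),\gamma(t_i))\le L+C$; conversely each $\gamma(s)$ is within $d'$-distance $L+C$ of the sample point $\gamma(t_{i-1})=\gamma'(t_{i-1})$, giving $d'_H(\Im(\gamma),\Im(\gamma'))\le L+C$. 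For the quasi-geodesic bounds on $\gamma'$, the decisive point is that $d'$ is a genuine metric with no additive defect in the triangle inequality, so I would compare $\gamma'(s),\gamma'(t)$ to their nearest sample points $\gamma(t_{i-1}),\gamma(t_{j-1})$: each comparison costs at most $L+C$ by the previous step, the middle term is controlled by the quasi-geodesic bound for $\gamma$ together with $|t_{i-1}-t_{j-1}|\le|s-t|+1$ (and $\ge|s-t|-1$), and a short calculation (using $L\ge 1$ for the lower bound) yields both inequalities with $C'=3(L+C)$, which is (2).

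For the length bound (3) I would split $\ell'(\gamma'|_{[t,t']})$ into per-segment $d'$-lengths. The number of sample subintervals meeting $[t,t']$ is at most $|t-t'|+1$, and I would convert $|t-t'|$ into $d'$-data through the lower quasi-geodesic bound for $\gamma'$ just established, namely $|t-t'|\le L\bigl(d'(\gamma'(t),\gamma'(t'))+C'\bigr)$. Feeding the per-segment length bound into this count then yields an estimate of the required shape $k_1\,d'(\gamma'(t),\gamma'(t'))+k_2$, with $k_1=L(L+C)$ and the additive constant absorbing the two partial end-segments and the $\pm1$ coming from the floors, so that $k_2=(LC'+4)(L+C)$.

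I expect the one genuinely new obstacle, and the only place where the argument departs from \cite{Mar99}, to be the per-segment length estimate. By Lemma \ref{length} the $d'$-length of the $i$-th segment equals its $d$-length $d(\gamma(t_{i-1}),\gamma(t_i))$, so the logarithmic distortion between $d$ and $d'$ enters precisely here: one cannot read the segment's $d'$-length directly off the clean $d'$-bound $d'(\gamma(t_{i-1}),\gamma(t_i))\le L+C$, but must first translate that bound into a bound on the $d$-length before summing. Pinning down this per-segment constant, and then carrying the ensuing bookkeeping through to the stated $k_1,k_2$, is the crux; the remaining verifications are faithful, if tedious, transcriptions of the classical argument, made possible by the fact that $d'$ satisfies the ordinary triangle inequality exactly.
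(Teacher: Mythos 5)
Your construction is the same as the paper's: sample $\gamma$ at the integer points of $[a,b]$ (plus the endpoints), join successive sample images by $d$-geodesics, and run the classical taming argument, using Lemma \ref{length} to identify $d'$-lengths with $d$-lengths. Your verifications of (1), (2) and (4) match the paper's and are correct; in fact your derivation of (4) via monotonicity of $x\mapsto\ln(1+x)$ along the $d$-geodesic is slightly cleaner than the paper's.

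The one place you stop short is exactly the step you flag as the crux, and it deserves to be carried out, because it does not deliver the constants you (and the paper) claim. The $d'$-length of the $i$-th segment equals, by Lemma \ref{length}, its $d$-length $d(\gamma(t_{i-1}),\gamma(t_i))$, and the only available bound is $d'(\gamma(t_{i-1}),\gamma(t_i))\le L+C$, which translates to $d(\gamma(t_{i-1}),\gamma(t_i))\le \exp(L+C)-1$. So the per-segment $d'$-length bound is $\exp(L+C)-1$, not $L+C$, and the bookkeeping yields constants of the form $k_1=L\bigl(\exp(L+C)-1\bigr)$ and $k_2=(LC'+4)\bigl(\exp(L+C)-1\bigr)$ rather than the stated $k_1=L(L+C)$, $k_2=(LC'+4)(L+C)$. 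Note that the paper's own proof silently writes $\ell(\gamma'\vert_{[k,k+1]})\le L+C$ at this point, which is unjustified (take $L=1$, $C=0$: the segment can have $d$-length up to $\e-1>1$); so your instinct that the logarithmic distortion enters precisely here is correct, and the honest conclusion is that item (3) holds with constants depending only on $L$ and $C$ but larger than those displayed. This discrepancy is immaterial for the only use of the lemma, in Corollary \ref{cor:non-quasi-geodesic}, where any constants $k_1,k_2$ determined by $L$ and $C$ suffice; but you should either finish the computation with the corrected per-segment bound or state (3) with unspecified constants $k_1(L,C)$, $k_2(L,C)$.
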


Careful readers might already notice that in the situation of \cite[Lemma III.H.1.11]{Mar99}, we need to assume that the \emph{new} metric $d'$ on $X$ is geodesic instead of the current setting that the \emph{original} metric $d$ is geodesic. Although the proof is similar, here we also provide one for convenience to readers.

\begin{proof}[Proof of Lemma \ref{tame quasi-geodesics}]
Define $\gamma'$ to agree with $\gamma$ on $\Sigma := \{a,b\} \cup (\ZZ \cap (a,b))$, then choose geodesic segments with respect to $d$ joining the images of successive points in $\Sigma$ and define $\gamma'$ by concatenating linear reparameterisations of these geodesic segments.

  Let $[t]$ denote the point of $\Sigma$ closest to $t \in [a,b]$. Note that the $d'$-distance of the images of successive points in $\Sigma$ is at most $L+C$, and hence we have
  \[
    d'(\gamma'(t),\gamma'([t])) = \ln \left(1+ d(\gamma'(t),\gamma'([t]))\right) \leq \ln (1+\exp(L+C)-1) \leq L+C,
  \] 
  which implies that $d'_H(\Im(\gamma), \Im(\gamma')) \leq L+C$. 
  Since $\gamma$ is an $(L,C)$-quasi-geodesic with respect to $d'$, and $\gamma([t]) = \gamma'([t])$ for all $t\in [a,b]$, we have
  \[ 
    d'(\gamma'(t),\gamma'(t')) \leq d'(\gamma'([t]),\gamma'([t'])) + 2(L+C) \leq L|[t]-[t']| + C +2(L+C) \leq L|t-t'| +3(L+C),
  \]
  and similarly, we have 
\begin{equation}\label{EQ:est}
    d'(\gamma'(t),\gamma'(t')) \geq \frac{1}{L}|t-t'| -3(L+C)
\end{equation}
  for all $t,t' \in [a,b]$. Hence $\gamma'$ is an $(L,C')$-quasi-geodesic with respect to $d'$.

  For any integers $s,s' \in \Sigma$ with $s \leq s'$, Lemma \ref{length} tells us that
  \[
    \ell'(\gamma' \vert _{[s,s']}) = \ell (\gamma' \vert _{[s,s']}) = \sum_{k=s}^{s'-1} \ell (\gamma' \vert _{[k,k+1]}) \leq |s-s'|(L+C).
  \]
  Similarly for any $s,s' \in \Sigma$, we have $\ell'(\gamma' \vert _{[s,s']}) \leq (|s-s'|+2)(L+C)$. Hence for any $t,t' \in [a,b]$, we have
  \[
    \ell'(\gamma' \vert_{[t,t']}) \leq (|[t]-[t']|+2)(L+C) + (L+C) \leq (|t-t'|+4)(L+C).
  \]
  Combining with Inequality (\ref{EQ:est}),
  we obtain that $\ell(\gamma'|_{[t,t']}) \leq k_1 d'(\gamma'(t),\gamma'(t')) + k_2$ for $k_1,k_2$ defined in (3). Hence we conclude the proof.
\end{proof}


\begin{proof}[Proof of Corollary \ref{cor:non-quasi-geodesic}]
Assume that $(X,d')$ is $(L,C)$-quasi-geodesic for some $L \geq 1$ and $C \geq 0$. For any $x,y\in X$, choose an $(L,C)$-quasi geodesic $\gamma:[0,T] \rightarrow X$ (with respect to $d'$) connecting them. Lemma \ref{tame quasi-geodesics} implies that there is a rectifiable path $\gamma':[0,T] \rightarrow X$ which is an $(L,3L+3C)$-quasi-geodesic (with respect to $d'$) connecting $x$ and $y$. Moreover, we have $\ell'(\gamma') \leq k_1 d'(x,y) + k_2$ for $k_1=L(L+C)$ and $k_2 =(3L(L+C)+4)(L+C)$. Hence Lemma \ref{length} implies that 
\[
    k_1 d'(x,y) + k_2 \geq \ell'(\gamma') = \ell(\gamma') \geq d(x,y) = \exp(d'(x,y)) -1.
\]
Note that $(X,d)$ is unbounded, which implies that $(X,d')$ is also unbounded. Hence taking $d'(x,y) \to \infty$, we conclude a contradiction and finish the proof.
%
\end{proof}

Next, we move to study the relation of the quasi-ball property and the bounded eccentricity property between the metric spaces $(X,d)$ and $(X,d')$. Again to save the notation, for $x\in X$ and $r\geq 0$, we denote $B(x,r)$ and $B'(x,r)$ the closed balls with respect to the metrics $d$ and $d'$, respectively. For $A \subset X$ and $\delta \geq 0$, we denote $\Nd_\delta(A)$ and $\Nd'_\delta(A)$ the $\delta$-neighbourhood of $A$ with respect to the metrics $d$ and $d'$, respectively.



\begin{lem}\label{lem:quasi-ball property}
Let $(X,d)$ be a metric space and $d'$ be the metric on $X$ defined in (\ref{EQ:new metric}). Then $(X,d)$ has the quasi-ball property \emph{if and only if} $(X,d')$ has the quasi-ball property.
\end{lem}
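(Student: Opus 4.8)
The plan is to observe that the two metrics have \emph{exactly the same family of balls}, merely with reparametrised radii, and that Hausdorff distances transform under the very same monotone change of variable $t \mapsto \ln(1+t)$ that relates $d$ and $d'$. Once this is in place, the quasi-ball property for one metric converts directly into the quasi-ball property for the other.

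First I would record the identity of balls. Since $t \mapsto \ln(1+t)$ is a strictly increasing bijection of $[0,\infty)$ onto $[0,\infty)$, for any $x \in X$ and $r \geq 0$ we have
\[
B'(x,r) = \{y\in X : \ln(1+d(x,y)) \leq r\} = \{y\in X: d(x,y) \leq \exp(r)-1\} = B(x, \exp(r)-1),
\]
and conversely $B(x,R) = B'(x, \ln(1+R))$. In particular the collection of $d'$-balls coincides with the collection of $d$-balls as subsets of $X$, so the intersection of any two $d'$-balls is literally the same set as the intersection of the two corresponding $d$-balls.

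Second, I would transport this to neighbourhoods and then to Hausdorff distance. Because the infimum in $d(x,A)$ commutes with the monotone map, we have $d'(x,A) = \ln(1+d(x,A))$, whence $\Nd'_\delta(A) = \Nd_{\exp(\delta)-1}(A)$ for every $A \subseteq X$ and every $\delta \geq 0$. Feeding this into the definition of the Hausdorff distance gives, for all $A, B \subseteq X$,
\[
d'_H(A,B) = \ln\bigl(1 + d_H(A,B)\bigr),
\]
that is, the two Hausdorff distances are related by the same change of variable as the two metrics. With these preliminaries the equivalence is immediate in both directions: if $(X,d)$ has the quasi-ball property with constant $R_0$, then any intersection $S$ of two $d'$-balls is also an intersection of two $d$-balls, so there is a $d$-ball $B(c,R)$ (which equals the $d'$-ball $B'(c,\ln(1+R))$) with $d_H(S,B(c,R)) \leq R_0$, and then $d'_H(S,B(c,R)) = \ln(1+d_H(S,B(c,R))) \leq \ln(1+R_0)$; hence $(X,d')$ has the quasi-ball property with constant $\ln(1+R_0)$. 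The converse is symmetric, a $d'_H$-bound $\delta_0$ yielding the $d_H$-bound $\exp(\delta_0)-1$ through the same identity.

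I do not expect a genuine obstacle here: the whole content sits in the first observation that the two metrics are related by a monotone reparametrisation that fixes the family of balls. The only points requiring a little care are the boundary conventions (the empty intersection, for which both quasi-ball constants may be taken to be $0$) and checking that the increasing homeomorphism $t \mapsto \ln(1+t)$ of $[0,\infty)$ indeed passes through the infimum defining $d(x,A)$ and through the infimum defining the Hausdorff distance; both verifications are routine.
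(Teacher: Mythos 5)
Your proposal is correct and follows essentially the same route as the paper: both arguments rest on the identification $B'(x,r)=B(x,\exp(r)-1)$ of the two families of balls, followed by a transfer of Hausdorff-distance bounds under the change of variable $t\mapsto\ln(1+t)$. The only difference is cosmetic: you derive the exact identity $d'_H(A,B)=\ln\bigl(1+d_H(A,B)\bigr)$, which yields the sharp constants $\ln(1+R_0)$ and $\exp(\delta_0)-1$, whereas the paper settles for the one-sided comparison inequalities $\ln(1+x)\leq x$ and $x\leq\alpha\ln(1+x)$ on $[0,\delta]$.
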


\begin{proof}
Firstly, we assume that $(X,d')$ has the quasi-ball property, \emph{i.e.}, there exists $\delta \geq 0$ such that the intersection of any two balls in $(X,d')$ is $\delta$-close to another ball (\emph{i.e.}, their Hausdorff distance is bounded by $\delta$). Note that there exists $\alpha=\alpha(\delta) >1$ such that $x \leq \alpha \ln(1+x)$ for all $x \in [0,\delta]$. This implies that for $A,B \subseteq X$ with $A \subseteq \Nd'_\delta(B)$, we have $A \subseteq \Nd_{\alpha\delta}(B)$.

Given two balls $B(x,s)$ and $B(y,t)$ in $(X,d)$, it is clear that $B(x,s) = B'(x,\ln(1+s))$ and $B(y,t) = B'(y,\ln(1+t))$. Hence by the assumption, there exists another ball $B'(c,r)$ in $(X,d')$ such that 
\[
d'_H\left( B'(x,\ln(1+s)) \cap B'(y,\ln(1+t)), B'(c,r) \right)  \leq \delta.
\]
It follows from the previous paragraph that in this case, we have
\[
d_H\left( B(x,s) \cap B(y,t), B(c, \exp(r)-1) \right) = d_H\left( B'(x,\ln(1+s)) \cap B'(y,\ln(1+t)), B'(c,r) \right) \leq \alpha \delta.
\]
Hence $(X,d)$ has the quasi-ball property. 

The converse holds similarly, using the fact that $\ln(1+x) \leq x$ holds for all $x \geq 0$.
\end{proof}

Combining Lemma \ref{lem:new metric is hyperbolic}, Corollary \ref{cor:non-quasi-geodesic} and Lemma \ref{lem:quasi-ball property}, we obtain the following, which concludes part of Theorem \ref{thm:main result}.

\begin{cor}\label{cor:quasi-ball ceg}
Let $(X,d)$ be a geodesic metric space which is not hyperbolic (\emph{e.g.}, the Euclidean space $\RR^2$ with the Euclidean metric) and $d'$ be the metric on $X$ defined in (\ref{EQ:new metric}). Then $(X,d')$ is a non-quasi-geodesic metric space which satisfies the Gromov's $4$-point condition but \emph{not} the quasi-ball property.
\end{cor}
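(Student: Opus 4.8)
The plan is to assemble the three required conclusions for $(X,d')$---that it is non-quasi-geodesic, satisfies the Gromov's $4$-point condition, and fails the quasi-ball property---by invoking the three results established earlier in this section, after dispatching one small preliminary observation. The only step that is not a verbatim citation is noting that the hypothesis ``not hyperbolic'' forces $(X,d)$ to be unbounded, and I expect this to be the mildest of obstacles.

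First I would record that any bounded metric space automatically satisfies the Gromov's $4$-point condition: if $\diam(X,d) = D$, then $0 \leq (x|y)_p \leq D$ for all $x,y,p$, so the defining inequality holds with $\delta = D$. Consequently, since $(X,d)$ is assumed not hyperbolic, it must be unbounded. With this in hand, the non-quasi-geodesic conclusion follows immediately: as $(X,d)$ is geodesic and unbounded, Corollary \ref{cor:non-quasi-geodesic} applies directly and shows that $(X,d')$ cannot be $(L,C)$-quasi-geodesic for any $L \geq 1$ and $C \geq 0$.

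Next, the hyperbolicity of $(X,d')$ is immediate from Lemma \ref{lem:new metric is hyperbolic}, which holds for every metric space with no further hypotheses. For the failure of the quasi-ball property, I would invoke the Chatterji--Niblo characterisation recalled in the introduction, namely that a geodesic metric space is hyperbolic \emph{if and only if} it has the quasi-ball property. Since $(X,d)$ is geodesic but not hyperbolic, it therefore fails the quasi-ball property; then Lemma \ref{lem:quasi-ball property}, which transfers this property between $d$ and $d'$ in both directions, shows that $(X,d')$ fails it as well.

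Combining these three pieces yields the corollary. The prototypical instance is $\RR^2$ with the Euclidean metric, which is geodesic, unbounded, and well known to be non-hyperbolic (geodesic triangles in the flat plane can be arbitrarily far from satisfying the thin-triangles condition), so it supplies a concrete space to which the argument applies.
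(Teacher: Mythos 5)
Your proof is correct and follows essentially the same route as the paper, which simply combines Lemma \ref{lem:new metric is hyperbolic}, Corollary \ref{cor:non-quasi-geodesic}, Lemma \ref{lem:quasi-ball property} and the Chatterji--Niblo characterisation. Your preliminary observation that non-hyperbolicity forces unboundedness (so that Corollary \ref{cor:non-quasi-geodesic} applies) is a small but genuine gap in the paper's presentation that you correctly identify and fill.
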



Concerning the bounded eccentricity property, we have the following:

\begin{lem}\label{lem:ecc control}
Let $(X,d)$ be a metric space and $d'$ be the metric on $X$ defined in (\ref{EQ:new metric}). If $(X,d)$ satisfies the bounded eccentricity property, so does $(X,d')$.
\end{lem}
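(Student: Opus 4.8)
The plan is to exploit the fact that, since $d'$ is obtained from $d$ by applying the strictly increasing function $r \mapsto \ln(1+r)$, the two metrics determine \emph{exactly the same} balls as subsets of $X$. The first step is to record the two elementary identities
\[
B'(x,r) = B(x, \e^r - 1) \qquad \text{and} \qquad B(x,s) = B'\big(x, \ln(1+s)\big),
\]
valid for all $x \in X$ and all $r, s \geq 0$, which follow immediately from the monotonicity of $\ln(1+\cdot)$. An immediate consequence is that the intersection of any two $d'$-balls coincides, as a subset of $X$, with an intersection of two $d$-balls.

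With this in hand, I would fix an arbitrary intersection $S = B'(x,r) \cap B'(y,t)$ of two $d'$-balls. By the identities above, $S = B(x, \e^r - 1) \cap B(y, \e^t - 1)$ is also the intersection of two $d$-balls, so the bounded eccentricity hypothesis for $(X,d)$ applies to it: there is a uniform constant $\delta \geq 0$ (independent of $S$), together with points $c, c' \in X$ and a radius $R \geq 0$, such that $B(c,R) \subseteq S \subseteq B(c', R + \delta)$.

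The final step is to translate this sandwich back into $d'$-balls and read off the $d'$-eccentricity. Setting $R' := \ln(1+R)$, the identities give $B(c,R) = B'(c, R')$ and $B(c', R+\delta) = B'\big(c', \ln(1+R+\delta)\big)$, whence
\[
B'(c, R') \subseteq S \subseteq B'(c', R' + \delta'), \qquad \delta' := \ln(1+R+\delta) - \ln(1+R).
\]
Thus the eccentricity of $S$ with respect to $d'$ is at most $\delta'$, and it remains to bound $\delta'$ uniformly in $S$. Since $\delta' = \ln\!\big(1 + \tfrac{\delta}{1+R}\big)$ and $R \geq 0$, one has $\delta' \leq \ln(1+\delta)$, a bound independent of the two balls; this establishes the bounded eccentricity property for $(X,d')$ with constant $\ln(1+\delta)$. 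The genuine content of the argument lies in the identification of balls in the first step; once that is in place there is no real obstacle, the only point deserving a moment's care being this last monotonicity estimate, which shows that the logarithmic gap between inner and outer radius only shrinks as the inner radius grows.
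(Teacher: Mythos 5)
Your proposal is correct and follows essentially the same route as the paper's own proof: identify the intersection of two $d'$-balls with an intersection of two $d$-balls, apply the hypothesis there, convert the sandwich back to $d'$-balls, and bound the radial gap by $\ln\!\big(1+\tfrac{\delta}{1+R}\big) \leq \ln(1+\delta)$. The only cosmetic difference is that the paper further relaxes the final constant to $\delta$ itself via $\ln(1+\delta)\leq\delta$.
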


\begin{proof}
Given two balls $B'(x,\ln(1+r))$ and $B'(y,\ln(1+s))$ in $(X,d')$, we assume that their intersection $Y$ is non-empty. Note that 
\[
Y=B'(x,\ln(1+r)) \cap B'(y,\ln(1+s)) = B(x,r)\cap B(y,s)
\]
is again an intersection of balls in $(X,d)$. Hence by the assumption, there exist $c,c' \in X$ and $R \geq 0$ such that
  \begin{equation*}
    B(c,R) \subseteq Y \subseteq B(c',R+\delta_0).
  \end{equation*}
  Therefore in $(X,d')$, we have
  \begin{equation*}
    B'(c,\ln (1+R)) \subseteq Y \subseteq B'(c',\ln (1+R+\delta_0)).
  \end{equation*}
  Then the eccentricity of $Y$ in $(X,d')$ is bounded above by
  \begin{equation*}
    \ln (1+R+\delta_0) - \ln (1+R) = \ln (1+\frac{\delta_0}{1+R}) \leq \ln (1+\delta_0) \leq \delta_0,
  \end{equation*}
  which concludes the proof.
\end{proof}

\begin{rem}\label{rem:ecc converse control}
Readers might wonder whether the converse to Lemma \ref{lem:ecc control} holds. Unfortunately, in general this is false. Note that if we have
\begin{equation*}
  B'(c,\ln (1+R)) \subseteq Y \subseteq B'(c',\ln (1+R) + \delta_0)
\end{equation*}
in $(X,d')$, then it implies that
\begin{equation*}
  B(c,R) \subseteq Y \subseteq B(c',\exp(\delta_0) \cdot (1+R)-1)
\end{equation*}
in $(X,d)$. Hence the eccentricity of $Y$ in $(X,d)$ is bounded by $(\exp(\delta_0)-1)(1+R)$. However, if $R$ (\emph{i.e.}, the radius of the ball contained in $Y$) does not have a uniform upper bound, then neither does the eccentricity of $Y$.
\end{rem}


Remark \ref{rem:ecc converse control} suggests the following partial converse to Lemma \ref{lem:ecc control}:

\begin{lem}\label{negative prop}
Let $(X,d)$ be a metric space and $d'$ be the metric on $X$ defined in (\ref{EQ:new metric}). Assume that there exists a sequence of subsets $\{Y_n\}_{n\in \NN}$ of $X$ satisfying the following:
  \begin{enumerate}
    \item each $Y_n$ is the intersection of two balls in $(X,d)$;
    \item there exists $M >0$ such that for all $n \in \NN$, the radius (with respect to $d$) of any ball contained in $Y_n$ is bounded above by $M$;
    \item the eccentricity of $Y_n$ in $(X,d)$ is not uniformly bounded.
  \end{enumerate}
  Then $(X,d')$ does \emph{not} satisfy the bounded eccentricity property.
\end{lem}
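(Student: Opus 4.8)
The plan is to argue by contradiction: I would assume that $(X,d')$ \emph{does} satisfy the bounded eccentricity property and then show that the eccentricities of the $Y_n$ measured in $(X,d)$ must be uniformly bounded, contradicting hypothesis (3). Since hypothesis (3) forces the eccentricities of the $Y_n$ in $(X,d)$ to grow along some subsequence, the relevant $Y_n$ are non-empty, so I will freely work with those.

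First I would observe that the $Y_n$ sit inside the family whose $d'$-eccentricity is being controlled. Indeed, by hypothesis (1) each $Y_n$ is an intersection of two $d$-balls, and since $B(x,r) = B'(x,\ln(1+r))$ for every $x$ and $r$, each $Y_n$ is simultaneously an intersection of two $d'$-balls. Hence, if $(X,d')$ has bounded eccentricity with constant $\delta_0$, then every $Y_n$ has $d'$-eccentricity at most $\delta_0$.

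Next I would translate this back to $(X,d)$, following the computation already recorded in Remark \ref{rem:ecc converse control}. Fixing $n$, the bound on the $d'$-eccentricity of $Y_n$ produces (choosing an admissible configuration with parameter $\delta_0+1$ to realise the infimum) points $c_n,c_n'$ and a radius $R_n' \geq 0$ with $B'(c_n,R_n') \subseteq Y_n \subseteq B'(c_n',R_n'+\delta_0+1)$. Writing $R_n := \exp(R_n')-1$ and using the identity $B'(c,\rho) = B(c,\exp(\rho)-1)$, this becomes $B(c_n,R_n) \subseteq Y_n \subseteq B(c_n',\exp(\delta_0+1)(1+R_n)-1)$, so the $d$-eccentricity of $Y_n$ is at most $(\exp(\delta_0+1)-1)(1+R_n)$.

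Finally I would invoke hypothesis (2) to remove the dependence on $n$: the inner ball $B(c_n,R_n)$ is contained in $Y_n$, so its $d$-radius satisfies $R_n \leq M$. Therefore the $d$-eccentricity of every $Y_n$ is bounded by the single constant $(\exp(\delta_0+1)-1)(1+M)$, contradicting hypothesis (3). The whole argument is really just careful bookkeeping of the two translations between $d$- and $d'$-balls; the one point that requires attention — and the reason hypothesis (2) is indispensable — is that the blow-up factor $\exp(\delta_0+1)(1+R_n)$ coming from the logarithmic change of metric is multiplicative in the radius of the inscribed ball, so only a \emph{uniform} bound $M$ on that radius keeps the resulting $d$-eccentricities uniformly bounded. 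This is precisely the phenomenon isolated in Remark \ref{rem:ecc converse control}.
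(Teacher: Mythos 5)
Your proposal is correct and follows essentially the same route as the paper's own proof: assume bounded $d'$-eccentricity, use $B(x,r)=B'(x,\ln(1+r))$ to transfer the inclusions back to $(X,d)$, bound the inscribed radius by $M$ via hypothesis (2), and derive a uniform bound contradicting hypothesis (3). Your use of $\delta_0+1$ to handle the fact that the eccentricity is an infimum is a minor (and slightly more careful) variation that does not change the argument.
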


\begin{proof}
Assume the opposite, \emph{i.e.}, there exists $\delta_0>0$ such that the eccentricity of the intersection of any two balls in $(X,d')$ is uniformly bounded by $\delta_0$. Hence by condition (1) we know that for each $n\in \NN$, there exist $c_n,c_n' \in X$ and $r_n \geq 0$ such that
  \begin{equation*}
    B'(c_n,\ln (1+r_n)) \subseteq Y_n \subseteq B'(c_n', \ln (1+r_n)+\delta_0).
  \end{equation*}
Hence in $(X,d)$, we have
\begin{equation*}
    B(c_n,r_n) \subseteq Y_n \subseteq B(c_n,\exp(\delta_0) \cdot (1+r_n)-1).
  \end{equation*}
By condition (2), we know that $r_n \leq M$ for all $n\in \NN$. Therefore, we have
\begin{equation*}
    \exp(\delta_0) \cdot (1+r_n)-1-r_n =  (\exp(\delta_0)-1)(1+r_n) \leq (\exp(\delta_0)-1)(1+M),
  \end{equation*}
which is a contradiction to condition (3) in the assumption.
\end{proof}

\begin{ex}\label{ex:ecc unbdd}
Now we show that there exists a non-quasi-geodesic metric space which satisfies the Gromov's $4$-point condition but \emph{not} the bounded eccentricity property. For example, take $(X,d)$ to be the Euclidean space $\RR^2$ equipped with the Euclidean metric, and let $d'$ be the metric on $X$ defined in (\ref{EQ:new metric}). Lemma \ref{lem:new metric is hyperbolic} and Corollary \ref{cor:non-quasi-geodesic} imply that $(X,d')$ is not quasi-geodesic but satisfies the Gromov's $4$-point condition. 

For each $n\in \NN$, take $x_n=(0,0)$ and $y_n=(2n,0)$ and set
\[
Y_n = B(x_n,n+1) \cap B(y_n,n+1).
\]
It is easy to see that in $(X,d)$, the biggest ball contained in $Y_n$ is $B((n,0),1)$. Moreover, the diameter of $Y_n$ is
\[
d\left((n,\sqrt{2n+1}),(n,-\sqrt{2n+1})\right) = 2\sqrt{2n+1},
\]
which implies that the eccentricity of $Y_n$ cannot be uniformly bounded. Therefore applying Lemma \ref{negative prop}, we conclude the result.
\end{ex}

\begin{proof}[Proof of Theorem \ref{thm:main result}]
Combining Corollary \ref{cor:quasi-ball ceg} and Example \ref{ex:ecc unbdd}, we conclude the proof for Theorem \ref{thm:main result}.
\end{proof}

Finally, recall that in \cite{Wen08} the following weaker form of the bounded eccentricity property was considered: For a metric space $(X,d)$, there exist $\lambda>0$ and $\delta>0$ such that for any two balls $B_1, B_2 \subseteq X$ with non-empty intersection there exist $z,z' \in X$ and $r \geq 0$ such that
\[
B(z,r) \subseteq B_1 \cap B_2 \subseteq B(z', \lambda r+ \delta).
\]
Wenger showed in \cite{Wen08} that this condition also implies Gromov's hyperbolicity for geodesic metric spaces. 

We remark that this weaker form of the bounded eccentricity property does not hold either for the space $(X,d')$ constructed in Example \ref{ex:ecc unbdd}.

\bibliographystyle{plain}
\bibliography{hyperbolicity}

\end{document}